\documentclass{article}
\usepackage{amssymb,amsmath}
\oddsidemargin 2.2cm
\evensidemargin 2.2cm
\newtheorem{theorem}{Theorem}[section]{}
{}
{}
\newtheorem{example}{Example}[section]

\def\hs{\hskip0.04cm'}
\def\dz{,\kern-0.1em ,}
\def\dd#1{{#1\kern-0.4em\char"16\kern-0.1em}}
\def\D#1{{\raise0.2ex\hbox{-}\kern-0.42em #1}}
\def\Bbb{\mathbb}

\def\la{\lambda}

\def\t{\theta}
\def\a{\alpha}
\def\b{\beta}
\def\G{{\rm\Gamma}}
\def\g{\gamma}

\def\z{\zeta}

\def\Re{{\rm Re\,}}

\def\rmd{\mathrm{d}}

\numberwithin{equation}{section}

\pagestyle{myheadings}

\title{Series over Bessel functions as series in terms of Riemann's zeta function}

\author{Slobodan B. Tri\v ckovi\'c$^1$, Miomir S. Stankovi\'c$^2$\\[2mm]
$^1$\small University of Ni\v s, Department of Mathematics,\\[-1mm]
\small ORCID: 0000-0002-3766-8579, e-mail: sbt@junis.ni.ac.rs\\[-1mm]
$^2$\small Mathematical Institute of the Serbian Academy of\\[-1mm]
\small Sciences and Arts, Belgrade, Serbia}

\date{\today}

\begin{document}

\maketitle

\begin{abstract} Relying on the Hurwitz formula, we find sums of the series over sine and
cosine functions through the Hurwitz zeta function. Using another summation formula for these
trigonometric series, we find finite sums of some series over the Riemann zeta function.
\end{abstract}

\noindent {\bf Keywords}: Dirichlet Riemann's zeta function, Hurwitz's zeta function, Gamma function, Bessel functions,
spherical Bessel functions.

\noindent {\bf Subject classification}: 11M35; 33B15; 33E20.

\section{Main results}

Apostol \cite{apostol} derived Hurwitz's formula
\begin{equation*}
\z(1-s,a)=\frac{\G(s)}{(2\pi)^s}\Big(e^{-i\pi s/2}\sum_{n=1}^\infty\frac{e^{2i\pi na}}{n^s}
+e^{i\pi s/2}\sum_{n=1}^\infty\frac{e^{-2i\pi na}}{n^s}\Big),
\end{equation*}
where $\z(1-s,a)$ is the Hurwitz zeta function with $0<a\leqslant 1$. We can write it in the equivalent form
\begin{equation}\label{hurwitz-formula}
\z(1-s,a)=\frac{2\,\G(s)}{(2\pi)^s}\sum_{n=1}^\infty\frac{1}{n^s}\cos(\tfrac{\pi}2s-2n\pi a),
\end{equation}
which enables us to find the limiting value
\begin{equation*}
\lim_{s\to2m-1}\big(\z(1-s,a)+\z(1-s,1-a)\big)=\lim_{s\to2m-1}\Big(\frac{4\,\G(s)}{(2\pi)^s}
\sum_{n=1}^\infty\frac{\cos\frac{\pi s}2\cos2n\pi a}{n^s}\Big)=0,
\end{equation*}
where $m\in\mathbb N$.

\begin{theorem} For $\Re s>1$ and $0<a\leqslant 1$ there hold the following formulas
\begin{equation}\label{sin-cos}
\begin{split}
&\sum_{n=1}^\infty\frac{\sin(nx+y)}{n^s}=\frac{(2\pi)^s}{2\,\G(s)\sin\pi s}
\Big(\cos\Big(y-\frac{\pi s}2\Big)\z\Big(1-s,\frac x{2\pi}\Big)\\
&\hskip4cm-\cos\Big(y+\frac{\pi s}2\Big)\z\Big(1-s,1-\frac x{2\pi}\Big)\Big),\\
&\sum_{n=1}^\infty\frac{\cos(nx+y)}{n^s}=\frac{(2\pi)^s}{2\,\G(s)\sin\pi s}
\Big(\sin\Big(y+\frac{\pi s}2\Big)\z\Big(1-s,1-\frac x{2\pi}\Big)\\
&\hskip4cm-\sin\Big(y-\frac{\pi s}2\Big)\z\Big(1-s,\frac x{2\pi}\Big)\Big),
\end{split}
\end{equation}
where $y\in\mathbb R$.
\end{theorem}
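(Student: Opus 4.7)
The plan is to use the Hurwitz formula \eqref{hurwitz-formula} twice, with the parameter $a$ taking the values $x/(2\pi)$ and $1-x/(2\pi)$, and then solve the resulting linear system for the trigonometric series $\sum\sin(nx)/n^s$ and $\sum\cos(nx)/n^s$. The formulas for $\sum\sin(nx+y)/n^s$ and $\sum\cos(nx+y)/n^s$ then follow by expanding $\sin(nx+y)$ and $\cos(nx+y)$ via the addition formulas.

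More precisely, first I would substitute $a=x/(2\pi)$ in \eqref{hurwitz-formula} and expand
\[
\cos\!\Big(\tfrac{\pi s}{2}-nx\Big)=\cos\tfrac{\pi s}{2}\cos nx+\sin\tfrac{\pi s}{2}\sin nx,
\]
obtaining one linear relation between the two series $C(s,x):=\sum_{n\ge 1}\cos(nx)/n^s$ and $S(s,x):=\sum_{n\ge 1}\sin(nx)/n^s$ and the Hurwitz value $\z(1-s,x/(2\pi))$. Then I would repeat with $a=1-x/(2\pi)$; using $\cos(\tfrac{\pi s}{2}-2n\pi+nx)=\cos(\tfrac{\pi s}{2}+nx)$, the expansion of this cosine produces a second linear relation involving $\z(1-s,1-x/(2\pi))$. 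Adding and subtracting the two relations isolates $C(s,x)$ and $S(s,x)$:
\[
C(s,x)=\frac{(2\pi)^s\bigl[\z(1-s,\tfrac{x}{2\pi})+\z(1-s,1-\tfrac{x}{2\pi})\bigr]}{4\,\G(s)\cos(\pi s/2)},\quad
S(s,x)=\frac{(2\pi)^s\bigl[\z(1-s,\tfrac{x}{2\pi})-\z(1-s,1-\tfrac{x}{2\pi})\bigr]}{4\,\G(s)\sin(\pi s/2)}.
\]

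Next, writing $\sin(nx+y)=\cos y\sin nx+\sin y\cos nx$, I would substitute the closed forms for $S(s,x)$ and $C(s,x)$, put everything over the common denominator $4\,\G(s)\sin(\pi s/2)\cos(\pi s/2)$, and use the duplication identity $2\sin(\pi s/2)\cos(\pi s/2)=\sin\pi s$. Grouping coefficients of $\z(1-s,x/(2\pi))$ and $\z(1-s,1-x/(2\pi))$ produces the combinations $\cos y\cos(\pi s/2)\pm\sin y\sin(\pi s/2)=\cos(y\mp\pi s/2)$, which collapse directly to the first formula in \eqref{sin-cos}. The second formula is obtained by the completely analogous manipulation starting from $\cos(nx+y)=\cos y\cos nx-\sin y\sin nx$, where the coefficients now reassemble as $\sin y\cos(\pi s/2)\pm\cos y\sin(\pi s/2)=\sin(\pi s/2\pm y)$ up to signs.

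The whole argument is algebraic and essentially routine; the only point requiring a little care is bookkeeping of signs so that the final expressions reveal the shifted cosines $\cos(y\mp\pi s/2)$ and sines $\sin(y\pm\pi s/2)$, and the implicit restriction $0<x\le 2\pi$ coming from the range of $a$ in \eqref{hurwitz-formula}, needed so that both Hurwitz arguments $x/(2\pi)$ and $1-x/(2\pi)$ lie in $(0,1]$. Convergence of the original trigonometric series is guaranteed by the hypothesis $\Re s>1$.
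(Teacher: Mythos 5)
Your proposal is correct and follows essentially the same route as the paper: both proofs evaluate Hurwitz's formula at $a=x/(2\pi)$ and $a=1-x/(2\pi)$ and take the linear combinations that isolate the sine and cosine series. The only difference is presentational --- the paper multiplies the two instances of the (complex-exponential) Hurwitz formula by $e^{i(y\pm\pi s/2)}$, subtracts, and reads off real and imaginary parts in one stroke, whereas you first solve the real $2\times2$ system for the $y=0$ sums and then superpose via the addition formulas; your closing remark that one implicitly needs $0<x<2\pi$ so that both Hurwitz arguments lie in $(0,1]$ is a point the paper leaves unstated.
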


\begin{proof} We replace $a$ first with $1-x/2\pi$, then with $x/2\pi$. Further,
we multiply the first equality by $e^{i(y+\pi s/2)}$, and the latter by $e^{i(y-\pi s/2)}$.
After subtracting, we obtain
\begin{align*}
&\hskip-1.5cm e^{i(y+\pi s/2)}\z\Big(1-s,1-\frac x{2\pi}\Big)-e^{i(y-\pi s/2)}\z\Big(1-s,\frac x{2\pi}\Big)\\
&=\frac{\G(s)}{(2\pi)^s}\big(e^{i\pi s}-e^{-i\pi s}\big)\sum_{n=1}^\infty\frac{e^{i(nx+y)}}{n^s}\\
&=\frac{2\,\G(s)\sin\pi s}{(2\pi)^s}\Big(i\sum_{n=1}^\infty\frac{\cos(nx+y)}{n^s}
-\sum_{n=1}^\infty\frac{\sin(nx+y)}{n^s}\Big).
\end{align*}
Hence, comparing the corresponding real and imaginary parts, we arrive at \eqref{sin-cos}. \end{proof}

For $y=0$, we obtain
\begin{equation*}
\sum_{n=1}^\infty\frac{\cos nx}{n^s}=\frac{(2\pi)^s}{4\,\G(s)}
\frac{\z\big(1-s,1-\frac x{2\pi}\big)+\z\big(1-s,\frac x{2\pi}\big)}{\cos\frac{\pi s}2}.
\end{equation*}
Thus, we evaluate
\begin{align}
\sum_{n=1}^\infty\frac{\cos nx}{n^{2m-1}}&=\frac{(2\pi)^{2m-1}}{4(2m-2)!}
\lim_{s\to2m-1}\frac{\z\big(1-s,1-\frac x{2\pi}\big)+\z\big(1-s,\frac x{2\pi}\big)}{\cos\frac{\pi s}2}\label{cos-hurwitz}\\
&=\frac{(-1)^{m-1}(2\pi)^{2m-2}}{(2m-2)!}\Big(\z\hs\Big(2-2m,1-\frac x{2\pi}\Big)+\z\hs\Big(2-2m,\frac x{2\pi}\Big)\Big).\notag
\end{align}
We shall use \eqref{cos-hurwitz} to obtain a finite sum of the series over the zeta functions.
\begin{theorem} For $m\in\mathbb N$, there holds
\begin{align}
&\hskip-0.35cm\sum_{k=1}^\infty\frac{\z(2k)}{(2k)_{2m-1}}\Big(\frac x{2\pi}\Big)^{2k}
=\frac{\log x-H_{2m-2}}{2(2m-2)!}
+\frac{(\frac{2\pi}x)^{2m-2}}{2(2m-2)!}\bigg(\z\hs\Big(2-2m,1-\frac x{2\pi}\Big)\notag\\
&+\z\hs\Big(2-2m,\frac x{2\pi}\Big)\bigg)
+(-1)^{m}\sum_{k=0}^{m-2}\frac{(-1)^kx^{2k-2m+2}\z(2m-2k-1)}{2(2k)!}.\label{zeta-pochh}
\end{align}
\end{theorem}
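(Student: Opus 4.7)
The plan is to recover the identity \eqref{zeta-pochh} from the Taylor-and-logarithm expansion of the series $S(x):=\sum_{n=1}^\infty\cos(nx)/n^{2m-1}$ around $x=0$, and then to use \eqref{cos-hurwitz} to convert $S(x)$ into the Hurwitz-zeta combination that appears on the right of \eqref{zeta-pochh}.

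I would start from the Lerch/Jonqui\`ere expansion of the polylogarithm,
\begin{equation*}
\sum_{n=1}^\infty\frac{e^{inx}}{n^s}=\G(1-s)(-ix)^{s-1}+\sum_{k=0}^\infty\frac{\z(s-k)}{k!}(ix)^k,\qquad 0<x<2\pi,
\end{equation*}
valid for $s$ not a positive integer. Taking real parts, using $\G(1-s)=\pi/(\G(s)\sin\pi s)$ together with $\sin\pi s=2\sin(\pi s/2)\cos(\pi s/2)$, and keeping only the even-$k$ contributions, I obtain
\begin{equation*}
\sum_{n=1}^\infty\frac{\cos nx}{n^s}=\frac{\pi\,x^{s-1}}{2\,\G(s)\cos(\pi s/2)}+\sum_{j=0}^\infty\frac{(-1)^j\z(s-2j)}{(2j)!}x^{2j}.
\end{equation*}
Setting $s=2m-1+\ep$ and letting $\ep\to0$, the first term is singular since $\cos(\pi s/2)\to 0$, and so is the $j=m-1$ summand through $\z(1+\ep)$; expanding $\cos(\pi s/2)$, $\G(s)$, $x^{s-1}$ and $\z(1+\ep)$ to order $\ep$ I would verify that the two $1/\ep$ poles and the accompanying $\gamma$ contributions cancel by the digamma identity $\psi(2m-1)=H_{2m-2}-\gamma$, leaving the finite remainder $(-1)^m x^{2m-2}(\log x-H_{2m-2})/(2m-2)!$.

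After this limit the expansion reads
\begin{equation*}
S(x)=\frac{(-1)^m x^{2m-2}(\log x-H_{2m-2})}{(2m-2)!}+\sum_{\substack{j=0\\j\ne m-1}}^\infty\frac{(-1)^j\z(2m-1-2j)}{(2j)!}x^{2j}.
\end{equation*}
I would split the surviving sum at $j=m-1$: the finite head $j=0,\dots,m-2$ produces the explicit $\z(2m-2k-1)$ contribution on the right of \eqref{zeta-pochh}, while for the tail $j\ge m$ I would set $\ell=j-m+1\ge1$ and apply the functional equation $\z(1-2\ell)=2(-1)^\ell(2\ell-1)!(2\pi)^{-2\ell}\z(2\ell)$, together with $(2\ell+2m-2)!/(2\ell-1)!=(2\ell)_{2m-1}$, to turn it into $2(-1)^{m-1}x^{2m-2}\sum_{\ell\ge1}\z(2\ell)/(2\ell)_{2m-1}\,(x/2\pi)^{2\ell}$. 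Solving for this series and using \eqref{cos-hurwitz} to express $S(x)$ as the required combination of $\z\hs(2-2m,x/2\pi)$ and $\z\hs(2-2m,1-x/2\pi)$ then delivers \eqref{zeta-pochh}.

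The delicate step is the $\ep\to0$ cancellation: one must simultaneously perform first-order expansions of $\cos(\pi s/2)$, $\G(s)^{-1}$, $x^{s-1}$ and $\z(s-2(m-1))$ and check that the principal parts and the Euler-$\gamma$ pieces drop out in favor of $\log x-H_{2m-2}$. The other ingredients---splitting the sum at $j=m-1$, the zeta functional equation, and the Pochhammer identification---are routine bookkeeping, and \eqref{cos-hurwitz} plays exactly the role advertised before the statement.
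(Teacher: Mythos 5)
Your proposal follows essentially the same route as the paper: both start from the expansion $\sum_{n\ge1}\cos(nx)/n^s=\frac{\pi x^{s-1}}{2\G(s)\cos(\pi s/2)}+\sum_{k\ge0}\frac{(-1)^k\z(s-2k)}{(2k)!}x^{2k}$ (which the paper simply cites), extract the finite part $\frac{(-1)^m x^{2m-2}}{(2m-2)!}(\log x-H_{2m-2})$ from the colliding singularities at $s\to 2m-1$, convert the tail $k\ge m$ via $\z(1-2\ell)=(-1)^\ell\,2\,\G(2\ell)\z(2\ell)/(2\pi)^{2\ell}$ into the Pochhammer series, and then equate with \eqref{cos-hurwitz} to solve for it. The only cosmetic differences are that you rederive the starting formula from the Lerch/Jonqui\`ere expansion and phrase the limit as an $\ep$-expansion rather than L'H\^opital's rule; the argument is correct.
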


\begin{proof} Taking account of \cite{intr-19-6}
\begin{equation*}
\sum_{n=1}^\infty\frac{\cos nx}{n^s}=\frac{\pi x^{s-1}}{2\G(s)\cos\frac{\pi s}2}
+\sum_{k=0}^\infty\frac{(-1)^k\z(s-2k)}{(2k)!}\,x^{2k},
\end{equation*}
after letting $s\to2m-1$, it follows
\begin{align}
&\sum_{n=1}^\infty\frac{\cos nx}{n^{2m-1}}=\sum_{k=0}^{m-2}\frac{(-1)^k\z(2m-2k-2)}{(2k)!}\,x^{2k}
+\lim_{s\to2m-1}\Big(\frac{\pi x^{s-1}}{2\G(s)\cos\frac{\pi s}2}\label{remainder}\\
&\hskip1cm+\frac{(-1)^{m-1}\z(s-2m+2)}{(2m-2)!}\,x^{2m-2}\Big)+\sum_{k=m}^\infty\frac{(-1)^k\z(2m-2k-1)}{(2k)!}\,x^{2k}.\notag
\end{align}
By bringing the fractions in brackets to the same denominator and ap\-ply\-ing L'H\^{o}pital's rule,
we determine the above limiting value
\begin{multline*}
\lim_{s\to2m-1}\Big(\frac{\pi x^{s-1}}{2\G(s)\cos\frac{\pi s}2}
+\frac{(-1)^{m-1}\z(s-2m+2)}{(2m-2)!}\,x^{2m-2}\Big)\\
=\frac{(-1)^{m}x^{2m-2}}{(2m-2)!}\big(\log x-H_{2m-2}\big).
\end{multline*}
As for the remainder in \eqref{remainder}, we introduce a new running index by $k=m+j-1$,
then reverting to $k$ and recalling the relation
\begin{equation}\label{z(1-2n)}
\z(1-2n)=(-1)^n\frac{2(2n-1)!\z(2n)}{(2\pi)^{2n}}=(-1)^n\frac{2\,\G(2n)\z(2n)}{(2\pi)^{2n}},
\end{equation}
we come to the formula
\begin{align*}
&\hskip-0.2cm\sum_{n=1}^\infty\frac{\cos nx}{n^{2m-1}}
=\frac{(-1)^{m}x^{2m-2}}{(2m-2)!}\big(\log x-H_{2m-2}\big)
+\sum_{k=0}^{m-2}\frac{(-1)^kx^{2k}\z(2m-2k-1)}{(2k)!}\\
&\hskip3cm +2(-1)^{m-1}x^{2m-2}\sum_{k=1}^\infty\frac{\z(2k)}{(2k)_{2m-1}}\Big(\frac x{2\pi}\Big)^{2k}.
\end{align*}
Using \eqref{cos-hurwitz}, we obtain \eqref{zeta-pochh}. \end{proof}

We are dealing now with a more general series.

\begin{theorem} For $m,p\in\mathbb N$ and $0<x<2\pi$, there holds
\begin{equation}\label{zeta-pochh-mp}
\begin{split}
&\hskip-0.5cm\sum_{n=1}^\infty\frac{(m+n)_p}{(2n)_{2p+2m}}\z(2n)\Big(\frac x{2\pi}\Big)^{2n}\\
&=(-1)^{m-1}4x^{p+2m-2}\sum_{k=1}^p C_k
\sum_{j=0}^{2k-1}(-1)^j\prod_{i=1}^j(2k-i)\sum_{n=1}^\infty\frac{\z(2n)(\frac x{2\pi})^{2n}}{(2n)_{2m+j+1}}.
\end{split}
\end{equation}
\end{theorem}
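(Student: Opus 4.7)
The coefficients $C_k$ are not defined in the displayed excerpt; I expect them to be introduced in the course of the proof as partial-fraction residues attached to the rational function $(m+n)_p/(2n)_{2p+2m}$. Under this reading, the strategy is to reduce the identity to the simpler auxiliary sums $T'_{m,j}(x):=\sum_{n\ge1}\zeta(2n)(x/2\pi)^{2n}/(2n)_{2m+j+1}$ that have essentially been handled already in Theorem 1.2. Concretely, using Gamma-duplication in the form $(m+n)_p=2^{-p}\prod_{i=0}^{p-1}(2n+2m+2i)$, I would cancel the $p$ even-shift factors of $(2n)_{2p+2m}$ against the numerator to arrive at
\[
\frac{(m+n)_p}{(2n)_{2p+2m}}=\frac{2^{-p}}{(2n)_{2m}\,\prod_{l=0}^{p-1}(2n+2m+2l+1)},
\]
so that the LHS of \eqref{zeta-pochh-mp} becomes a $(2n)_{2m}$-type Dirichlet series tempered by $p$ additional odd-shifted linear factors in the denominator.

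The next step would be to apply a partial-fraction expansion of $\prod_{l=0}^{p-1}(2n+2m+2l+1)^{-1}$ with respect to its $p$ simple poles at $2n=-2m-2l-1$. Then, using the iterative identity $1/((2n)_r(2n+r))=1/(2n)_{r+1}$, each resulting simple fraction can be reabsorbed into complete Pochhammer reciprocals $1/(2n)_{2m+j+1}$ for $j=0,\ldots,2k-1$, picking up an alternating sign $(-1)^j$ and a descending-factorial weight $\prod_{i=1}^j(2k-i)$ at each reabsorption. The outer residues of the partial-fraction expansion are exactly the constants $C_k$ of the theorem. Multiplying by $\zeta(2n)(x/2\pi)^{2n}$ and summing over $n\ge1$ then reproduces the three nested sums on the right of \eqref{zeta-pochh-mp}, and the overall prefactor $(-1)^{m-1}4\,x^{p+2m-2}$ is pinned down by matching the $n=1$ leading term on both sides and is consistent with the $(-1)^{m-1}$ normalization already appearing in \eqref{cos-hurwitz}.

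The hardest step, I expect, is the combinatorial bookkeeping just described: verifying that, after partial fractions and telescoping, the coefficient of each $1/(2n)_{2m+j+1}$ is exactly $(-1)^j\prod_{i=1}^j(2k-i)$ with the correct residue $C_k$ outside. I would prove this by induction on $p$: the base case $p=1$ reduces to the elementary telescoping $1/(2n)_{2m+1}-1/(2n)_{2m+2}=2(m+n)/(2n)_{2m+2}$, and the inductive step propagates the descending-factorial weights one odd-shift at a time. A secondary worry is the interchange of summations, but since $0<x<2\pi$ makes $(x/2\pi)^{2n}$ decay geometrically and each auxiliary series $T'_{m,j}$ converges absolutely, this is routine and need not be belabored.
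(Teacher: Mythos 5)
Your strategy coincides with the paper's proof almost step for step: the cancellation $(m+n)_p=2^{-p}\prod_{i=0}^{p-1}(2n+2m+2i)$ against the even-shift factors of $(2n)_{2p+2m}$, the Heaviside partial-fraction expansion over the remaining odd linear factors whose residues are the $C_k$ (the paper takes $C_k=P(a_k)/Q\hs(a_k)$ at $a_k=-(2m+2k-1)/2$), and the re-expansion of each $\frac{1}{(2n)_{2m}(2n+2m+2k-1)}$ as $\sum_{j=0}^{2k-1}(-1)^j\prod_{i=1}^j(2k-i)\,(2n)_{2m+j+1}^{-1}$. The paper determines the coefficients $c_{j+1}=(-1)^j\prod_{i=1}^j(2k-i)$ by clearing denominators and repeatedly differentiating the resulting polynomial identity, rather than by induction; also your induction should run on $k$ (or on $j$ within a fixed $k$), not on $p$, since the identity to be proved is a per-$k$ statement independent of $p$. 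These are cosmetic differences.

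The one step you explicitly deferred is, however, exactly where the argument cannot be completed as you describe. You propose to pin down the prefactor $(-1)^{m-1}4x^{p+2m-2}$ by matching the $n=1$ terms, but the chain of identities you (and the paper) set up produces the clean constant $2^{1-p}$ in front of $\sum_{k=1}^p C_k(\cdots)$ and nothing else: there is no mechanism in the partial-fraction bookkeeping that could generate a sign $(-1)^{m-1}$ or a power $x^{p+2m-2}$. Indeed the stated prefactor is incompatible with the identity as an equality of power series, since the left-hand side starts at order $x^{2}$ while the right-hand side as displayed starts at order $x^{p+2m}$. The factor $(-1)^{m-1}4x^{p+2m-2}$ actually belongs to the later application \eqref{bess-sph}, where the series $\sum_n\frac{(m+n)_p}{(2n)_{2p+2m}}\z(2n)(\frac x{2\pi})^{2n}$ enters multiplied by $(-1)^{m-1}2^{p+1}x^{p+2m-2}$, and $2^{p+1}\cdot2^{1-p}=4$. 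So an honest execution of your ``match the leading term'' step would not confirm the displayed prefactor but rather expose it as a misprint; deferring it as routine is the gap in your write-up.
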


\begin{proof} We consider a decomposition of the fraction of two Pochhammer symbols in \eqref{zeta-pochh-mp}
\begin{align*}
\frac{(m+n)_p}{(2n)_{2p+2m}}&=\frac{(m+n)\cdots(m+n+p-1)}{(2n)_{2m}(2n+2m)\cdots(2n+2m+2p-1)}\\
&=\frac1{2^p(2n)_{2m}(2n+2m+1)(2n+2m+3)\cdots(2n+2m+2p-1)}.
\end{align*}
By applying Heaviside's method, in the next step, we make a decomposition of the rational function
\begin{equation*}
\frac{P(n)}{Q(n)}=\frac1{(2n+2m+1)(2n+2m+3)\cdots(2n+2m+2p-1)},
\end{equation*}
with $P(n)=1,Q(n)=(2n+2m+1)(2n+2m+3)\cdots(2n+2m+2p-1)$. Thus, we have
\begin{equation*}
\frac{P(n)}{Q(n)}=\frac{C_1}{n-a_1}+\frac{C_2}{n-a_2}+\cdots+\frac{C_p}{n-a_p},
\end{equation*}
where the constants $C_k,\,k=1,\dots,p$, are to be determined, and $Q(a_k)=0$.
So, we have
$$
C_k=\frac{P(a_k)}{Q\hs(a_k)},\quad a_k=-(2m+2k-1)/2,\quad k=1,\dots,p,
$$
which means that the left-hand side series in \eqref{zeta-pochh-mp} we can express as follows
\begin{equation}\label{ck}
\sum_{n=1}^\infty\frac{(m+n)_p}{(2n)_{2p+2m}}\z(2n)\Big(\frac x{2\pi}\Big)^{2n}
=\sum_{k=1}^p\frac{C_k}{2^{p-1}}\sum_{n=1}^\infty\frac{\z(2n)\big(\frac x{2\pi}\big)^{2n}}{(2n)_{2m}(2n+2m+2k-1)}.
\end{equation}
The whole idea consists in finding constants $c_1,\dots,c_{2k}$ in the representation
\begin{equation*}
\frac1{2n+2m+2k-1}=\frac{c_1}{2n+2m}+\frac{c_2}{(2n+2m)_2}+\cdots+\frac{c_{2k}}{(2n+2m)_{2k}},
\end{equation*}
where $k=1,\dots,p$. We multiply the numerator and denominator of the left-hand side fraction
by the missing factors between $(2n+2m+2k-1)$ and $(2n+2m)$, including the latter. Also, we bring
the right-hand side fractions to the same denominator. Thus, we obtain the equality of the numerators
\begin{multline*}
(2n+2m)\cdots(2n+2m+2k-2)=c_1(2n+2m+1)\cdots(2n+2m+2k-1)\\
+c_2(2n+2m+2)\cdots(2n+2m+2k-1)+\cdots+c_{2k},\quad k=1,\dots,p.
\end{multline*}
The highest power on the left-hand side and the product at $c_1$ is $(2n)^{2k-1}$. There follows $c_1=1$.
After rearrangements, we have
\begin{align}
&-(2k-1)(2n+2m+1)\cdots(2n+2m+2k-2)\notag\\
&=c_2(2n+2m+2)\cdots(2n+2m+2k-1)+c_3(2n+2m+3)\cdots\times\label{c2}\\
&\cdots(2n+2m+2k-1)+\cdots+c_{2k-1}(2n+2m+2k-1)+c_{2k}.\notag
\end{align}
Regarding both sides as polynomial functions in $n$ of the degree $2k-2$,
we take the $(2k-2)$th derivative. As a result, we have
\begin{equation*}
-(2k-1)2^{2k-2}(2k-2)!=c_22^{2k-2}(2k-2)!.
\end{equation*}
Hence, we find $c_2=-(2k-1)$. Replacing this value in \eqref{c2}, we can determine $c_3$ similarly
\begin{align*}
&(2k-1)(2k-2)(2n+2m+2)\cdots(2n+2m+2k-2)\notag\\
&\hskip2cm=c_3(2n+2m+3)\cdots(2n+2m+2k-1)+\\
&c_4(2n+2m+4)\cdots(2n+2m+2k-1)+\cdots+c_{2k-1}(2n+2m+2k-1)+c_{2k}.\notag
\end{align*}
Now, the polynomials on both sides are of the degree $2k-3$, and we take the $(2k-3)$th derivative,
and obtain
\begin{equation*}
(2k-1)(2k-2)2^{2k-3}(2k-3)!=c_32^{2k-3}(2k-3)!,
\end{equation*}
which implies $c_3=(2k-1)(2k-2)$. By repeating this procedure, we arrive at the relation
\begin{equation*}
\frac1{(2n)_{2m}(2n+2m+2k-1)}=
\sum_{j=0}^{2k-1}\frac{(-1)^j}{(2n)_{2m+j+1}}\prod_{i=1}^j(2k-i).
\end{equation*}
Together with \eqref{ck}, this gives rise to \eqref{zeta-pochh-mp}. \end{proof}

Thus, the procedure to evaluate the left-hand side sum of \eqref{zeta-pochh-mp} reduces to
a multiple application of \eqref{zeta-pochh}.

\section{Applications to some series over Bessel\\ functions}

Bessel functions\index{function!Bessel's} $J_\nu(x)$ defined by the Swiss mathematician Daniel Bernoulli,
then generalized and developed by Friedrich Bessel while studying the dynamics of gravitational systems
in the second decade of the 19th century, are canonical solutions of homogeneous Bessel's differential
equation
\begin{equation}\label{bess-diff-eq}
x^2\frac{\rmd^2y}{\rmd x^2}+x\frac{\rmd y}{\rmd x}+(x^2-\nu^2)y=0
\end{equation}
for an arbitrary complex number $\nu$. The particular solution
\begin{equation}\label{bess-sum}
J_\nu(x)=\sum_{m=0}^\infty
\frac{(-1)^m(\frac x2)^{2m+\nu}}{m!\G(m+\nu+1)},
\end{equation}
is called Bessel's function of the first kind order $\nu$ \cite{pruds}.
Apart from that, it can be represented by Poisson's integral
\begin{equation}\label{poisson}
J_\nu(z)=\frac{2\left(\frac z2\right)^\nu} {\G\left(\frac12\right)\G\left(\nu+\frac 12\right)}
\int_0^{\pi/2}\sin^{2\nu}\t\,\cos(z\cos\t)\,d\t,\quad\Re\nu>-\textstyle\frac12,
\end{equation}
proved by Poisson \cite{poisson} and Lommel \cite{lommel} to be a solution of homogenous Bessel's
differential equation for $2\nu\in\Bbb N_0$, relying on the summation of trigonometric series,
in \cite{zeit-2006}, we derived summation formulas for the series
\begin{equation}\label{10*}
\sum_{n=1}^\infty\frac{J_\nu(nx)}{n^\a}=\frac{\pi(\frac x2)^{\a-1}\sec\frac{\pi(\a-\nu)}2}
{2\hskip0.4mm\G(\frac{\a-\nu+1}2)\G(\frac{\a+\nu+1}2)}
+\sum_{k=0}^\infty\frac{(-1)^k(\frac x2)^{\nu+2k}\z(\a-\nu-2k)}{k!\,\G(\nu+k+1)},
\end{equation}
where $\a>0,\,\a>\nu>-\frac12,\,0<x<2\pi$.

Providing $\a-\nu=2m$, $m\in\Bbb N$, the right-hand side series in \eqref{10*} truncates because
Riemann's zeta function equals zero if the argument is a negative even integer. So, setting $\a=\nu+2m$
in \eqref{10*} brings the series in closed form
\begin{equation}\label{10*closed}
\sum_{n=1}^\infty\frac{J_\nu(nx)}{n^{\nu+2m}}
=\frac{m!(-1)^m\,x^{\nu+2m-1}\sqrt\pi}{2^\nu(2m)!\G(\nu+m+\frac12)}
+\sum_{k=0}^m\frac{(-1)^k\,\z(2m-2k)(\frac x2)^{\nu+2k}}{k!\,\G(\nu+k+1)},
\end{equation}
where $0<x<2\pi,\,\Re\nu>-2m-\frac12$.

In the case $\a-\nu=2m-1,\,m\in\Bbb N$, we cannot immediately place this in the right-hand side of
the relation \eqref{10*} because one encounter singularities, i.e. in $\sec\frac{\pi(\a-\nu)}2$
within the numerator of the first term and in the member of the right-hand series for the index
$k=m-1$.

\begin{theorem} For $\a=\nu+2m-1,\,m\in\Bbb N$, there holds
\begin{multline}\label{1/2}
\sum_{n=1}^\infty\frac{J_\nu(nx)}{n^{\nu+2m-1}}=\frac{(-1)^{m-1}(\frac x2)^{\nu+2m-2}}{2\G(m)\G(\nu+m)}
\Big(H_{m-1}+H_{\nu+m-1}-2\ln\frac x2\,\Big)\\
+\sum_{k=0}^{m-2}\frac{(-1)^k\z(2m-2k-1)(\frac x2)^{\nu+2k}}{k!\G(\nu+k+1)}
+\sum_{k=1}^\infty\frac{\G(2k)\z(2k)(\frac x{4\pi})^{2k}}{\G(m+k)\G(\nu+m+k)}.
\end{multline}
\end{theorem}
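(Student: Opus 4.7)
The plan is to parallel the proof of Theorem 2.2: substitute $\a = \nu + 2m - 1$ into \eqref{10*} and show that the two pieces that simultaneously become singular combine into a finite limit, while the rest of the series reorganises into the two convergent sums on the right-hand side of \eqref{1/2}. Setting $s = \a - \nu$ and letting $s \to 2m - 1$, there are exactly two obstructions: the closed-form prefactor of \eqref{10*} carries $\sec(\pi s/2)$, which blows up because $\cos(\pi(2m-1)/2) = 0$, and the series term with $k = m - 1$ carries $\z(s - 2m + 2) = \z(1)$, a simple pole. The finite block $0 \le k \le m - 2$ evaluates without trouble to $\sum_{k=0}^{m-2}(-1)^k\z(2m-2k-1)(x/2)^{\nu+2k}/(k!\,\G(\nu+k+1))$, which is exactly the middle sum of \eqref{1/2} and needs no further manipulation.

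Next I would combine the two singular contributions over a common denominator and apply L'H\^{o}pital's rule, just as in the derivation of \eqref{cos-hurwitz}. Writing $s = 2m - 1 + \ve$, the expansions $\cos(\pi s/2) = (-1)^m(\pi/2)\ve + O(\ve^3)$ and $\z(1 + \ve) = 1/\ve + \gamma + O(\ve)$ guarantee that the two $1/\ve$ poles cancel. The finite part is then read off by also expanding the analytic factors $(x/2)^{s-1}$ and $\G((s+1)/2)\G((s+2\nu+1)/2)$, which produces $\ln(x/2)$ together with the digamma values $\psi(m)$ and $\psi(\nu + m)$ via logarithmic differentiation; the identity $\psi(n) + \gamma = H_{n-1}$ recasts the finite part as $H_{m-1} + H_{\nu+m-1} - 2\ln(x/2)$ with prefactor $(-1)^{m-1}(x/2)^{\nu+2m-2}/(2\G(m)\G(\nu+m))$, reproducing the first term of \eqref{1/2}.

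Finally, for the tail $\sum_{k \ge m}$ I would reuse the index shift and reflection step from the proof of Theorem 2.2: set $j = k - m + 1$ and invoke \eqref{z(1-2n)} to rewrite $\z(2m - 2k - 1) = \z(1 - 2j) = (-1)^j \cdot 2\,\G(2j)\z(2j)/(2\pi)^{2j}$. The sign collapse $(-1)^{m+j-1}(-1)^j = (-1)^{m-1}$ extracts a global factor, the factorials are recast as $(m+j-1)! = \G(m+j)$, and gathering the powers of $x$ yields the advertised series $\sum_{k \ge 1}\G(2k)\z(2k)(x/(4\pi))^{2k}/(\G(m+k)\G(\nu+m+k))$, completing the right-hand side of \eqref{1/2}. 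The main obstacle I anticipate is the L'H\^{o}pital step: four simultaneously singular or vanishing factors must be expanded coherently, and the signs tracked carefully so that the $1/\ve$ poles cancel and the combination of harmonic numbers emerges with the correct overall sign. Once that bookkeeping is done, the rest is $\z(1-2n) \mapsto \z(2n)$ reshuffling of the same kind already performed for the cosine series.
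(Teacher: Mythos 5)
Your proposal is correct and follows essentially the same route as the paper's own proof: isolate the $k=m-1$ term of \eqref{10*}, pair it with the singular $\sec\frac{\pi(\a-\nu)}2$ prefactor, evaluate the limit by local expansion (equivalently L'H\^opital) to produce the $H_{m-1}+H_{\nu+m-1}-2\ln\frac x2$ block, and reindex the tail $k\gesl m$ via \eqref{z(1-2n)}. One remark: the reindexed tail actually carries the global factor $2(-1)^{m-1}(\frac x2)^{\nu+2m-2}$ in front of $\sum_{k\gesl1}\G(2k)\z(2k)(\frac x{4\pi})^{2k}/\big(\G(m+k)\G(\nu+m+k)\big)$, which the printed right-hand side of \eqref{1/2} omits --- the paper's own proof derives that factor and then drops it in the statement, so your derivation reproduces the paper's argument rather than its (apparently mistyped) final formula.
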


\begin{proof}Because of what is said above, it is necessary to take the limiting value in \eqref{10*} when
$\a\to\nu+2m-1$ 
\begin{equation*}
\lim_{\a\to \nu+2m-1}\Big(\frac{\frac\pi2(\frac x2)^{\a-1}\sec\frac{\pi(\a-\nu)}2}
{\G(\frac{\a-\nu+1}2)\G(\frac{\a+\nu+1}2)}
+\sum_{k=0}^{m-1}\frac{(-1)^k\z(\a-\nu-2k)(\frac x2)^{\nu+2k}}{k!\G(\nu+k+1)}.
\end{equation*}
For $k=0,1,\dots,m-2$ all the terms have no singularities if $\a=\nu+2m-1$,
so it suffices to deal only with the term for $k=m-1$, then we find
\begin{multline*}
\lim_{\a\to \nu+2m-1}\Big(\frac{\frac\pi2(\frac x2)^{\a-1}\sec\frac{\pi(\a-\nu)}2}
{\G(\frac{\a-\nu+1}2)\G(\frac{\a+\nu+1}2)}+\frac{(-1)^{m-1}(\frac x2)^{\nu+2m-2}\z(\a-\nu-2m+2)}{(m-1)!\,\G(\nu+m)}\Big)\\
=\frac{(-1)^{m-1}(\frac x2)^{\nu+2m-2}}{2\G(m)\G(\nu+m)}
\Big(H_{m-1}+H_{\nu+m-1}-2\log\frac x2\,\Big),
\end{multline*}
where $\g$ is Euler-Mascheroni's constant. Thus, we have
\begin{multline*}
\sum_{n=1}^\infty\frac{J_\nu(nx)}{n^{\nu+2m-1}}=\frac{(-1)^{m-1}(\frac x2)^{\nu+2m-2}}{2\G(m)\G(\nu+m)}
\Big(H_{m-1}+H_{\nu+m-1}-2\log\frac x2\,\Big)\\
+\sum_{k=0}^{m-2}\frac{(-1)^k\z(2m-2k-1)(\frac x2)^{\nu+2k}}{k!\G(\nu+k+1)}
+\sum_{k=m}^\infty\frac{(-1)^k\z(2m-2k-1)(\frac x2)^{\nu+2k}}{k!\,\G(\nu+k+1)}.
\end{multline*}

As for the remainder, we transform it by shifting the index, so we introduce the substitution $k=m+j-1$,
then revert to $k$ instead of using $j$, i.e.
\begin{multline*}
\sum_{k=m}^\infty\frac{(-1)^k\z(2m-2k-1)(\frac x2)^{\nu+2k}}{k!\,\G(\nu+k+1)}\\
=(-1)^{m-1}(\tfrac x2)^{\nu+2m-2}\sum_{k=1}^\infty\frac{(-1)^k\z(1-2k)(\frac x2)^{2k}}{\G(m+k)\G(\nu+m+k)}.
\end{multline*}
Using \eqref{z(1-2n)} substituting there $k$ for $n$, the last series becomes
\begin{equation*}
2(-1)^{m-1}(\tfrac x2)^{\nu+2m-2}\sum_{k=1}^\infty\frac{\G(2k)\z(2k)(\frac x{4\pi})^{2k}}{\G(m+k)\G(\nu+m+k)},
\end{equation*}
whereby we arrive at \eqref{1/2}. \end{proof}

In a special case, setting $\nu=\frac12$ in \eqref{1/2}, we have
\begin{align}
&\hskip-0.4cm\sum_{n=1}^\infty\frac{J_{1/2}(nx)}{n^{2m-1/2}}=\frac{(-1)^{m-1}(\frac x2)^{2m-3/2}}{2\G(m)\G(\frac12+m)}
\Big(H_{m-1}+H_{m-1/2}-2\log\frac x2\,\Big)\label{nu-2m-1}\\
&+\sqrt{\frac x2}\sum_{k=0}^{m-2}\frac{(-1)^k\z(2m-2k-1)(\frac x2)^{2k}}{k!\G(\frac12+k+1)}
+\sum_{k=1}^\infty\frac{\G(2k)\z(2k)(\frac x{4\pi})^{2k}}{\G(m+k)\G(\frac12+m+k)}.\notag
\end{align}

After some modifications and referring to \eqref{zeta-pochh}, we bring the right-hand series in \eqref{nu-2m-1}
 to the closed form, i.e.
\begin{multline}\label{z-2k-fin}
\sum_{k=1}^\infty\frac{\G(2k)\z(2k)(\frac x{4\pi})^{2k}}{\G(m+k)\G(\frac12+m+k)}
=\frac{(-1)^{m-1}}{\sqrt{2\pi}}8x^{2m-3/2}\sum_{k=1}^\infty\frac{\z(2k)}{(2k)_{2m}}\Big(\frac x{2\pi}\Big)^{2k}\\
=\frac{(-1)^m4x^{2m-3/2}}{(2m-1)!\sqrt{2\pi}}(H_{2m-1}-\log2\pi)
-\frac{4}{\sqrt{2x\pi}}\sum_{k=0}^{m-2}\frac{(-1)^k\z(2m-2k-1)x^{2k+1}}{(2k+1)!}\\
+\frac{(-1)^m4(2\pi)^{2m-3/2}}{(2m-1)!\sqrt{x}}\bigg(\z\hs\Big(1-2m,1-\frac x{2\pi}\Big)
-\z\hs\Big(1-2m,1+\frac x{2\pi}\Big)\bigg).
\end{multline}
By differentiating the basic property of Hurwitz's function
\begin{equation*}
\z(s,a)=a^{-s}+\z(s,a+1)
\end{equation*}
with respect to $s$ and putting then $a=\frac x{2\pi}$ and $s=1-2m$, for the last term of \eqref{z-2k-fin},
we find
$$
-\z\hs\Big(1-2m,1+\frac x{2\pi}\Big)=-\Big(\frac x{2\pi}\Big)^{2m-1}\log\frac x{2\pi}-\z\hs\Big(1-2m,\frac x{2\pi}\Big).
$$
We can rewrite now \eqref{z-2k-fin} and use it to give the left-hand side series of \eqref{nu-2m-1}
the closed form
\begin{align}
&\hskip-0.05cm\sum_{n=1}^\infty\frac{J_{1/2}(nx)}{n^{2m-1/2}}
=\frac{(-1)^m4(2\pi)^{2m-1}}{(2m-1)!\sqrt{2\pi x}}
\bigg(\frac12\big(H_{2m-1}-\log x\,\big)\Big(\frac x{2\pi}\Big)^{2m-1}\label{gen}\\
 &+\z\hs\Big(1-2m,1-\frac x{2\pi}\Big)\!-\!\z\hs\Big(1-2m,\frac x{2\pi}\Big)\bigg)\!
+\!\sqrt{\frac{2x}\pi}\sum_{k=0}^{m-2}\frac{(-x^2)^k\z(2m-2k-1)}{(2k+1)!}.\notag
\end{align}


\subsection{ Series over spherical Bessel functions}

We can generalize \eqref{gen} by considering spherical Bessel functions 
$j_k(z)$, linked to Bessel functions of the first kind by the equality
$$
j_p(x)=\sqrt{\frac\pi{2x}}\,J_{p+\frac12}(x),\quad p\in\mathbb N_0.
$$
So, multiplying both sides of \eqref{10*}
by $\sqrt{\dfrac\pi{2x}}$ and setting $\nu=p+\frac12,\,p\in\mathbb N_0$, we have
\begin{multline}\label{bessel-spher}
\hskip-0.2cm\sum_{n=1}^\infty\frac{j_p(nx)}{n^\a}
=\frac{\pi\sqrt\pi(\frac x2)^{\a-\frac32}}{2\,\G(\frac{\a-p}2+\frac14)\G(\frac{\a+p}2+\frac34)\cos(\frac{\pi(\a-p)}2-\frac\pi4)}\\
+\frac{\sqrt\pi}2\sum_{k=0}^\infty\frac{(-1)^k\z(\a-p-\frac12-2k)(\frac x2)^{p+2k}}{k!\,\G(p+\frac12+k+1)}.
\end{multline}
Here, we are referring to Euler's reflection formula
\begin{equation*}
\G(1-s)\G(s)=\frac\pi{\sin\pi s},\quad s\notin\mathbb Z,
\end{equation*}
with $s=\frac{\a-p}2+\frac14$, and obtain
\begin{equation*}
\G\big(\tfrac34-\tfrac{\a-p}2\big)\G\big(\tfrac{\a-p}2+\tfrac14\big)
=\frac\pi{\cos(\frac\pi4-\frac{\pi(\a-p)}2)}=\frac\pi{\cos(\frac{\pi(\a-p)}2-\frac\pi4)}.
\end{equation*}
Replacing this
in the first term of \eqref{bessel-spher}, and using the property
\begin{equation*}
\G\Big(n+\frac12\Big)=\frac{\sqrt\pi\,\G(2n)}{2^{2n-1}\G(n)}=\frac{(2n)!}{2^{2n}n!}\sqrt\pi,
\end{equation*}
we obtain the summation formula for the series over spherical Bessel functions 
\begin{multline*}
\sum_{n=1}^\infty\frac{(s)^{n-1}j_p((an-b)x)}{(an-b)^\a}
=\frac{c\sqrt\pi x^{\a-\frac32}\G(\frac34-\frac{\a-p}2)}{2^{\a+\frac12}\G(\frac34+\frac{\a+p}2)}\\
+(2x)^p\sum_{k=0}^\infty\frac{(-1)^k\G(p+k+1)F(\a-p-\frac12-2k)x^{2k}}{k!\,\G(2p+2k+2)}.
\end{multline*}
Now, for $a=1,b=0,s=1,c=1,F=\z$, we first evaluate
\begin{multline*}
\lim_{\a\to p+2m-1/2}\Big(\frac{\sqrt\pi x^{\a-\frac32}\G(\frac34-\frac{\a-p}2)}{2^{\a+\frac12}\G(\frac34+\frac{\a+p}2)}\\
\hskip3cm+2^{p}\frac{(-1)^{m-1}\G(p+m)\z(\a-p-2m+\frac32)x^{2m+p-2}}{(m-1)!\G(2p+2m)}.
\end{multline*}
By bringing to the same denominator and applying the L'H\^opital rule, after the limiting process, we obtain
\begin{multline*}
\sum_{n=1}^\infty\frac{j_p(nx)}{n^{p+2m-1/2}}
=\frac{(-1)^{m-1}(\frac x2)^{p+2m-\frac32}\sqrt{\frac\pi{8x}}}{\G(m)\G(p+\frac12+m)}
\Big(H_{m-1}+H_{p+m-\frac12}-2\log\frac x2\,\Big)\\
+(2x)^p\sum_{k=0}^{m-2}\frac{(-1)^k\G(p+k+1)\z(2m-1-2k)x^{2k}}{k!\,\G(2p+2k+2)}\\
+(-1)^{m-1}2^{p+1}x^{p+2m-2}\sum_{n=1}^\infty\frac{(m+n)_p}{(2n)_{2p+2m}}\z(2n)\Big(\frac x{2\pi}\Big)^{2n}.
\end{multline*}
Referring to \eqref{zeta-pochh-mp}, we evaluate 
\begin{multline}\label{bess-sph}
\sum_{n=1}^\infty\frac{j_p(nx)}{n^{p+2m-1/2}}
=\frac{(-1)^{m-1}(\frac x2)^{p+2m-\frac32}\sqrt{\frac\pi{8x}}}{\G(m)\G(p+\frac12+m)}
\Big(H_{m-1}+H_{p+m-\frac12}-2\log\frac x2\,\Big)\\
+(2x)^p\sum_{k=0}^{m-2}\frac{(-1)^k\G(p+k+1)\z(2m-1-2k)x^{2k}}{k!\,\G(2p+2k+2)}\\
+(-1)^{m-1}4x^{p+2m-2}\sum_{k=1}^p C_k\sum_{j=0}^{2k-1}(-1)^j\prod_{i=1}^j(2k-i)
\sum_{n=1}^\infty\frac{\z(2n)(\frac x{2\pi})^{2n}}{(2n)_{2m+j+1}}.
\end{multline}
We apply \eqref{zeta-pochh} to evaluate the sum of the series on the right side of \eqref{bess-sph}.

\begin{example}\sl 
For $m=3$ and $p=2$, we apply \eqref{bess-sph} and \eqref{zeta-pochh} obtain
\begin{multline*}
\sum_{n=1}^\infty\frac{j_2(nx)}{n^{15/2}}=\frac{x^6\left(\frac{3197}{1260}-\log x\right)}{15120}
-\frac{x^4 \zeta (3)}{420}+\frac{x^2 \zeta (5)}{30}+\frac{4 \pi ^7\zeta\hs\left(-7,\frac{x}{2 \pi }\right)}{315 x}\\
-\frac{4 \pi ^7 \zeta\hs\left(-7,1-\frac{x}{2\pi }\right)}{315 x}-\frac{\pi ^8 \zeta\hs\left(-8,\frac{x}{2 \pi }\right)}{105 x^2}
-\frac{\pi ^8 \zeta\hs\left(-8,1-\frac{x}{2 \pi }\right)}{105 x^2}\\
+\frac{2 \pi ^9 \zeta\hs\left(-9,\frac{x}{2 \pi
   }\right)}{945 x^3}-\frac{2 \pi ^9 \zeta\hs\left(-9,1-\frac{x}{2 \pi }\right)}{945 x^3}.
   \end{multline*}
\end{example}

\end{document}